\theoremstyle{plain}% Theorem-like structures provided by amsthm.sty
\newtheorem{theorem}{Theorem}[section]
\newtheorem{proposition}[theorem]{Proposition}
\theoremstyle{definition}
\theoremstyle{remark}
\begin{document}
	\title{Stabilization of  Non-Diagonal
		Infinite-Dimensional Systems with Delay Boundary
		Control}
	
	\author{
		\name{Ionu\c t Munteanu\thanks{CONTACT I.M. Email: ionut.munteanu@uaic.ro}}
		\affil{Faculty of Mathematics, Alexandru Ioan Cuza University\\ and Octav Mayer Institute of Mathematics, Romanian Academy, \newline
			Bulevardul Carol I 11,  700506, Ia\c si, Romania}
	}
	
	\maketitle

	\begin{abstract}
		Here we deal with the stabilization problem of non-diagonal systems by boundary   control. In the studied setting, the boundary control input is subject to a constant delay.  We use the spectral decomposition method and split the system into two components: an unstable and a stable one. To stabilize the unstable part of the system, we connect, for the first time in the literature, the famous backstepping control design  technique  with the  direct-proportional control design. More precisely, we construct  a proportional open-loop stabilizer, then, by means of the Artstein transformation we close the loop.   At the end of the paper, an example is provided in order to illustrate the acquired results.
	\end{abstract}
	
	\begin{keywords}
		projection, 
		boundary delay feedback control, stabilization, backstepping control design, direct-proportional control design
		
	\end{keywords}

	\section{Introduction}
	In this work, we study the stabilization problem  of the following  abstract boundary control system with delayed boundary control
	\begin{equation}\label{e1}\left\{\begin{array}{l}\frac{d}{dt}y(t)=\mathcal{A}y(t),\ t>0,\\
	\mathcal{B}y(t)=u(t-\tau),\ t>0,
	y(0)=y_o.\end{array}\right.\ \end{equation}
	A similar problem has been studied in the recent work \cite{krstic2}, under the additional assumption of  semi-simple eigenvalues. Here we drop this assumption and consider the general eigenvalues case.

	Throughout the paper we assume that $(\mathcal{H},\left<\cdot,\cdot\right>)$ and $(\mathcal{H}_0,\left<\cdot,\cdot\right>_0)$ are separable Hilbert spaces over the field $\mathbb{K}$, which is either $\mathbb{R}$ or $\mathbb{C}$. We denote by $\|\cdot\|$ the induced norm in $\mathcal{H}$.  In this work, we make the following assumptions:
	\begin{enumerate} 
		\item $\mathcal{A}:\mathcal{D}(\mathcal{A})\subset \mathcal{H}\rightarrow \mathcal{H}$ is a linear unbounded operator . \item $\mathcal{B}:\mathcal{D}(\mathcal{B})\subset \mathcal{H}\rightarrow \mathcal{H}_0,$ with $\mathcal{D}(\mathcal{A})\subset \mathcal{D}(\mathcal{B}),$ is a linear boundary operator. 
		\item $u:[-\tau,+\infty) \rightarrow \mathcal{H}_0$, with a known constant delay $\tau>0$ and $u|_{[-\tau,0)}=0,$ is the boundary control
	\end{enumerate}
	Besides this, we add the following assumptions:
	
	\noindent \textit{Assumption 1.} The operator $\mathcal{A}_0:=\mathcal{A}|_{\mathcal{D}(\mathcal{A})\cap \text{ker}(\mathcal{B})}$ is the generator of a $C_0-$ analytic semigroup, $\left\lbrace e^{t\mathcal{A}_0}:\ t\geq0\right\rbrace$ on $\mathcal{H}$. 
	Moreover, it has a countable set of eigenvalues $\left\lbrace \lambda_j\right\rbrace_{j=1}^\infty $ (repeated accordingly to their multiplicity), which are not necessarily semi-simple, i.e, the operator $\mathcal{A}_0$ may be non-diagonal. (The system $\left\lbrace \lambda_j\right\rbrace_{j=1}^\infty $
	is arranged decreasingly with respect to the real part.) Furthermore, from the corresponding eigenfunctions set, on may construct a Riesz basis in  $ \mathcal{H}$, $\left\lbrace \varphi_j \right\rbrace_{j=1}^\infty. $ 
	
	Let us recall that the semi-simple eigenvalue assumption (i.e., self-adjoint, or equivalently, diagonal operator $\mathcal{A}_0$)  is  related to the fact that the eigenfunction system forms a Riesz basis in $\mathcal{H}$. We notice that this assumption was a key point in the proofs in \cite{krstic2}. However, for non-self-adjoint operators, it may happen that the eigenfunctions  do not form a Riesz basis, but it is possible to construct  one. This is exactly the case of the example provided at the end of the paper.

	\noindent\textit{Assumption 2.} For each $\rho>0$, there exists $N\in\mathbb{N}^*$  such that: $\Re\lambda_j\leq -\rho$ for all $j\geq N+1$.

	Let us denote by $\left\lbrace \psi_j\right\rbrace_{j=1}^\infty, $ the associated bi-orthogonal system, in $\mathcal{H},$ of the basis $\left\lbrace\varphi_j \right\rbrace_{j=1}^\infty $. Namely, it holds $\left<\varphi_i,\psi_j\right>=\delta_{ij},\ i,j\in\mathbb{N}^*$, where $\delta_{ij}$ is the Kronecker symbol.
	
	\noindent\textit{Assumption 3.} We have $\left<\mathcal{A}_0\varphi_j,\psi_i\right>=0$ for all $j\geq N+1,\ 1\leq i\leq N.$
	
	Assumption 3 assures  that  the space spanned by the first $N$ functions $\left\{\varphi_i,\ 1\leq i\leq N\right\} $ is invariant under $\mathcal{A}_0$. 
	
	Next, we introduce the  matrix $\Lambda=(\lambda_{ij})_{i,j=1}^N$, where 
	$$\lambda_{ij}:=\overline{\left<\psi_i,\mathcal{A}_0\varphi_j\right>},\ 1\leq i,j\leq N.$$  $\Lambda$ is the Jordan matrix representing the operator $\mathcal{A}_0$ restricted to the space spanned by the first $N$ functions $\varphi_i$ .  The matrix $\Lambda$ may be non-diagonal. For $z\in \mathbb{C}$, we understand by $\overline{z}$ the complex conjugate of $z$.
	
	\noindent \textit{Assumption 4.} For $\gamma>0$  large enough, and for each $\beta\in R(\mathcal{B})$, there exists a unique solution, $D$, to the equation
	\begin{equation}\label{e2} -\mathcal{A}D+2\sum_{i,j=1}^N\lambda_{ij}\left<D,\psi_i\right>\varphi_j+\gamma D=0; \ \mathcal{B}D=\beta.\end{equation}
	
	This way, we may introduce the  operator  $D_\gamma: R(\mathcal{B})\rightarrow \mathcal{H}$, $D_\gamma\beta:=D$, $D$ solution to \eqref{e2}. Successively scalarly multiplying equation \eqref{e2} by $\psi_1,...,\psi_N$, we get
	\begin{equation}\label{e3}\left( \Lambda +\gamma I\right) \left(\begin{array}{c}\left<D_\gamma\beta,\psi_1\right>\\ \left<D_\gamma\beta,\psi_2\right>\\ ... \\ \left<D_\gamma\beta,\psi_N\right>\end{array} \right)= \left(\begin{array}{c}\left<\beta, l_1\right>_0\\ \left<\beta,l_2\right>_0\\ ...\\ \left<\beta,l_N\right>_0\end{array} \right), \end{equation}where $\left\lbrace l_1,l_2,...,l_N \right\rbrace $ are functions in $\mathcal{H}_0,$ which do not depend on $\gamma$ or $\beta$. Here, $I$ is the identity matrix of order $N$. We let
	$$L:=\left( \begin{array}{c}l_1, l_2, ..., l_N\end{array}\right)^T.$$ For a matrix $A$, we set $A^T$ for its transpose.

	\noindent \textit{Assumption 5.} The matrix $(\Lambda\ L)=\left[ L\  \Lambda L\  \Lambda ^2L\ ... \  \Lambda ^{N-1}L\right] $ has full rank in a nonzero measure set.

	Assumption 5 says, in fact,  that the couple $(\Lambda \ L)$  satisfies a Kalman-type controllability rank condition, which is not the classical Kalman condition associated to the system \eqref{e1}. In fact, it is weaker, as we shall see in the example below. In the diagonal case \cite{krstic2}, the classical Kalman controllability rank condition is assumed. This provides the existence of a so-called "feedback gain" matrix $K$, which is involved in the definition of the stabilizing controller.
	
	Let us consider the following example
	$$\begin{aligned}& y_t=\Delta y+c y,\ t>0, (x_1,x_2)\in (0,\pi)\times (0,\pi),\\&
	y(t,0,x_2)=u(t,x_2),\ t>0,\ x_2\in(0,\pi),\ y(t,x_1,x_2)=0 \text{ in rest}.\end{aligned}$$ It is well-know that the eigenvalues of the Dirichlet Laplace operator on the square $(0,\pi)\times (0,\pi)$ are $\left\{-(k^2+l^2),\  k,l\in\mathbb{N}\setminus\left\{0\right\}\right\},$ with the corresponding eigenfunctions $\left\{\varphi=\frac{2}{\pi}\sin(kx_1)\sin(lx_2),\ k,l\in\mathbb{N}\setminus\left\{0\right\}\right\}$ which form a Riesz basis in $H=L^2((0,\pi)^2).$ Let us project the equation on two eigenfunctions, namely on $$span\left\{\frac{2}{\pi}\sin(x_1)\sin(x_2),\ \frac{2}{\pi }\sin(x_1)\sin(2x_2)\right\}.$$ Also, consider the controller $u$ of the form
	$u(t,x_2)=v(t)\sin(x_2).$ In this case, the matrices $A_{N_0}$ and $B_{N_0}$ from \cite[Eq. (5)]{krstic2} are given by
	$$A_{N_0}=diag(-2+c,-5+c), \ B_{N_0}=\left(\begin{array}{c}\frac{2}{\pi}\int_0^\pi \sin^2 x_2dx_2\\ \frac{2}{\pi}\int_0^\pi \sin(2x_2)\sin x_2dx_2\end{array} \right) = \left(\begin{array}{c}1\\ 0\end{array} \right).$$ So, the classical Kalman controllability matrix has the form
	$$(A_{N_0}\ B_{N_0})=\left(\begin{array}{cc}1&(-2+c)\\
	0& 0\end{array} \right)$$ which, of course, does not have full rank. On the other hand, the new Kalman rank controllability condition,  Assumption 5, reads as: the matrix
	$$(\Lambda\ L)=\frac{2}{\pi}\left(\begin{array}{cc}\sin x_2 & (-2+c)\sin x_2\\ \sin (2x_2) &(-5+c)\sin(2x_2)\end{array} \right)$$ has full rank for $x_2\in J\subset (0,\pi)$, where $J$ is an nonempty interval. Easily seen, this holds true.   Of course, one may suggest  to look for a controller $u$ not depending on the function $\sin x_2$, but on another function such that the resulting classical Kalman matrix $(A_{N_0}\ B_{N_0})$ has full rank. In fact, this is exactly the main idea behind the proportional-type controllers: one  proves that there exists a functions set $\left\{\chi_k:\ k=1,2,...,N\right\}$ such that, once plugged the actuator
	$$u=\sum_{k=1}^Nv_k(t)\chi_k,$$ into the equations, it assures that the classical Kalman condition holds true. Clearly, this is not always possible, not even in the diagonal case. For the non-diagonal case, this task is even harder to realise. We refer to the recent work of Lasiecka and Triggiani \cite{la1}, where the authors study the problem of boundary stabilization  of the non-diagonal Navier-Stokes system, by proportional-type actuators (without delay). In order to ensure the existence of the set of functions $\left\{\chi_k:\ k=1,2,...,N\right\}$ which assure that the classical Kalman condition holds true, the authors are forced to plug into the equations an additional internal controller. So, for the stabilization of the non-diagonal Navier-Stokes equations, both internal and boundary actuators are needed, while the exact form of the functions set  $\left\{\chi_k:\ k=1,2,...,N\right\}$ and of the gain feedback matrix $K$ are not known. In the present paper, based on the new Kalman-type rank condition, Assumption 5, and on improvements on the control design in \cite{book}, we stabilize non-diagonal abstract systems only by boundary proportional-type controllers with delay, where the exact form of the functions set $\left\{\chi_k:\ k=1,2,...,N\right\}$  and the feedback gain matrix $K$ are given exactly. More precisely, $\chi_k=l_k,\ k=1,2,...,N,$ and $K=\sum_{k=1}^N(\Lambda^T+\gamma_kI)^{-1}A,$ see below for the notations. 
	
	At the end of the paper, in order to illustrate the results, we consider the problem of stabilization of the heat equation, with nonlocal boundary conditions, by boundary delayed  control. We show that all  the above assumptions hold true, especially we show that the new Kalman rank condition, Assumption 5, holds true. Of course, it would be interesting to study the  Navier-Stokes equations, but, to show that the new Kalman type condition holds true for this case is not a simple task at all. Thus, it is left for a subsequent work.

	Equations that take the form \eqref{e1} and obey Assumption 1-5  arise from physically meaningful problems, such as reaction-diffusion phenomena, phase turbulence phenomena and more.   For details, see \cite{krstic2} and the references therein.
	
	Here, our main objective is to design a feedback  law $u$ such that, once plugged into \eqref{e1}, it ensures the exponential stability of the corresponding closed-loop system.  Because we are only concerned in controlling the system from the starting time $t=0$, we assume that the system is uncontrolled for $t<0$. This is why it is imposed $u|_{[-\tau,0)}=0.$ Therefore, due to the delay $\tau$ in the control input of \eqref{e1}, the system remains open-loop for $t<\tau$ while the effect of the control input has an impact on the system only at times $t\geq \tau.$

	We should emphasize that, based on the ideas in the book of Krstic \cite{bookK},  Prieur and his co-workers, in the recent papers \cite{krstic1,krstic2},  provide substantial results regarding the boundary stabilization of parabolic-like equations by delayed controllers. They use the well-known backstepping technique and the diagonal assumption.   Here, we aim to deal with the non-diagonal case. In order to do this, our approach is to combine the backstepping technique in \cite{krstic2} with  the direct-proportional control design technique, developed by Barbu in  \cite{barbu}, and improved latter in \cite{book}. This is in fact the main novelty of the present work. Even if  the two methods, backstepping and direct-proportional, have lots of common  features, conceptually they are totally different. See the Appendix for more details about this.

	\section{The main result}
	
	In the spirit of proportional control design, the stabilizing feedback form will be given directly, following latter to prove the stability of the corresponding closed-loop system. The main result of this work is stated below.
	\begin{theorem}\label{t1} The unique solution of the closed-loop parabolic equation with boundary input delay
		\begin{equation}\left\{\begin{array}{l}\frac{d}{dt}y(t)=\mathcal{A}y(t),\ t>0,\\
		\mathcal{B}y(t)=-\displaystyle \sum_{k=1}^N\left<\left( \Lambda^T+{\gamma_k}I\right)^{-1} A\left[\sum_{j=0}^\infty (\mathcal{T}_\tau^jY)(t-\tau)\right], L\right>_{N},\\
		y(0)=y_o; \end{array} \right.\ \end{equation} is asymptotically exponentially converging to zero in $\mathcal{H}$.
		
		Here, $ A $  is introduced in \eqref{ie9} below, $Y$ is the vector consisting of the first $N$ modes of $y$, i.e.,
		$$Y(t):=\left(\left<y(t),\psi_1\right> ,\  \left<y(t),\psi_2\right>, ..., \  \left<y(t),\psi_N\right>\right)^T;$$ $0<\gamma_1<\gamma_2<...<\gamma_N$,  are $N$ large enough positive numbers; and   the operator $\mathcal{T}_\tau$ is defined in \eqref{po30} below. $\left<\cdot,\cdot\right>_N$ stands for the standard scalar product in $\mathbb{K}^N$.
	\end{theorem}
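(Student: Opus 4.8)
The plan is to stabilize separately the finite-dimensional unstable part and the infinite-dimensional stable part of \eqref{e1}, combining the direct-proportional eigenvalue assignment of \cite{barbu,book} with the Artstein reduction that removes the delay. First I would introduce the spectral projector $P_Ny=\sum_{i=1}^N\langle y,\psi_i\rangle\varphi_i$ onto the first $N$ (unstable) modes and split $y=y_u+y_s$, with $y_u=P_Ny$ and $y_s=(I-P_N)y$. By Assumption 3 the range of $P_N$ is invariant under $\mathcal A_0$, so the decomposition is compatible with the generator; by Assumption 2 together with the analyticity of $\{e^{t\mathcal A_0}\}$, the restriction of the semigroup to the stable subspace satisfies $\|e^{t\mathcal A_0}(I-P_N)\|\le Me^{-\rho t}$. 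Thus $y_s$ sits in a cascade driven by the (bounded, decaying) boundary input and by $y_u$, and once $y_u$ is shown to decay exponentially, $y_s$ inherits exponential decay by a routine variation-of-constants estimate. The problem therefore reduces to proving that $Y(t)=(\langle y(t),\psi_i\rangle)_{i=1}^N$ tends to zero exponentially.

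Next I would homogenize the inhomogeneous boundary condition by means of the lifting operator $D_\gamma$ of Assumption 4: subtracting $D_\gamma(\mathcal By)$ from $y$ removes the boundary data, and projecting the resulting evolution onto $\psi_1,\dots,\psi_N$ through \eqref{e3} yields, up to an exponentially small coupling from $y_s$, a finite-dimensional control system governed by $\Lambda$ and actuated through the vector $L$. The modification $2\sum_{i,j}\lambda_{ij}\langle D,\psi_i\rangle\varphi_j+\gamma D$ built into \eqref{e2} is what supplies the resolvent factors $(\Lambda^T+\gamma_kI)^{-1}$ in the gain (the transpose arising naturally from the duality in the proportional construction). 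The core direct-proportional step is then to prove that, for $0<\gamma_1<\cdots<\gamma_N$ large enough, the gain $K=\sum_{k=1}^N(\Lambda^T+\gamma_kI)^{-1}A$ acting through the actuators $l_k$ renders the closed-loop matrix, say $\Lambda_{\mathrm{cl}}$, Hurwitz. This is exactly where Assumption 5 enters: the Kalman-type rank condition on $(\Lambda\ L)$ guarantees stabilizability and that the $N$ distinct shifts $\gamma_k$ suffice to push the finite spectrum into the open left half-plane. I would establish this by a direct spectral/determinant computation in the spirit of \cite{barbu,book}, valid even when $\Lambda$ carries nontrivial Jordan blocks, rather than by diagonalization.

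With a stabilizing delay-free gain in hand, I would reintroduce the delay via the Artstein transformation, whose explicit Neumann-series form is the operator $\sum_{j=0}^\infty\mathcal T_\tau^j$ appearing in the statement: it maps the delayed state $Y(t-\tau)$ to the Artstein predictor and converts the delay equation into the delay-free equation $\dot Z=\Lambda_{\mathrm{cl}}Z$ carrying the same Hurwitz matrix. I would check that $\mathcal T_\tau$ is of Volterra/quasi-nilpotent type so that the series converges in the relevant norm, that it reproduces the predictor exactly, and that $Z$ and $Y$ differ by an exponentially decaying term, whence the exponential decay of $Z$ transfers to $Y$. Assembling the two estimates, and verifying well-posedness and uniqueness of the closed-loop trajectory by analytic-semigroup perturbation theory (the feedback being admissible since it factors through the bounded functionals $\langle\cdot,\psi_i\rangle$ and the smoothing operator $D_\gamma$), would then give $\|y(t)\|\le Ce^{-\omega t}\|y_o\|$ for some $\omega>0$.

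I expect the principal obstacle to be the proportional-design step in the non-diagonal setting: showing that the closed-form gain $K$ makes $\Lambda_{\mathrm{cl}}$ Hurwitz when $\Lambda$ has genuine Jordan blocks and the rank condition of Assumption 5 holds only on a set of positive measure, since the diagonalization and the classical Kalman argument used in \cite{krstic2} are no longer available. A secondary technical point is to justify rigorously the convergence of $\sum_{j\ge0}\mathcal T_\tau^j$ and its identification with the Artstein predictor.
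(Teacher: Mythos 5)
Your overall architecture does coincide with the paper's: spectral splitting into an unstable finite-dimensional part and a stable tail, homogenization through the lifting operator of Assumption 4, a proportional feedback built from the resolvents $(\Lambda^T+\gamma_kI)^{-1}$, and delay compensation via the Artstein/backstepping transport representation solved as the Neumann series $\sum_{j\geq 0}\mathcal{T}_\tau^jY$. But the step you yourself flag as the ``principal obstacle'' is exactly the decisive step of the proof, you leave it unproved, and the route you sketch for it is not the one that works.

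The missing idea is this: $A$ is not a free gain to be tuned, it is \emph{defined} as $A=(B_1+\cdots+B_N)^{-1}$, where $B_k=(\Lambda+\gamma_kI)^{-1}\mathbf{B}\,\overline{(\Lambda^T+\gamma_kI)^{-1}}$ and $\mathbf{B}=\left(\langle l_i,l_j\rangle_0\right)_{i,j}$ is the Gram matrix of the actuators. The sole point where Assumption 5 enters is the proof that $B_1+\cdots+B_N$ is invertible (the paper's Proposition 3.1): if $\left(\sum_k B_k\right)z=0$, positive semi-definiteness of $\mathbf{B}$ forces $\langle(\Lambda+\gamma_kI)^{-1}L,z\rangle_N=0$ for every $k$, and column operations on $\det\left[(\Lambda+\gamma_1I)^{-1}L\ \cdots\ (\Lambda+\gamma_NI)^{-1}L\right]$ reduce its nonvanishing precisely to the rank condition on $\left[L\ \Lambda L\ \cdots\ \Lambda^{N-1}L\right]$; this argument is purely algebraic and indifferent to Jordan blocks. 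Once $A$ exists it is Hermitian positive definite, and stability of the reduced delay-free loop is then \emph{not} a pole-placement statement: with $U=Y$ (identity feedback --- no gain matrix at all) the reduced system is $\frac{d}{dt}Y=(\Lambda+C)Y=-\sum_k\gamma_kB_kAY$, and since $\sum_kB_kA=I$ one has $\Lambda+C=-\gamma_1I+\sum_{k\geq2}(\gamma_1-\gamma_k)B_kA$, whence the weighted Lyapunov estimate $\langle(\Lambda+C)z,Az\rangle_N\leq-\gamma_1\|A^{1/2}z\|_N^2$, because each $\langle B_kAz,Az\rangle_N\geq0$ and $\gamma_1-\gamma_k\leq0$; multiplying the equation by $AY$ gives exponential decay at rate $\gamma_1$. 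Your framing --- Assumption 5 ``guarantees stabilizability'' so that the shifts $\gamma_k$ ``push the finite spectrum into the open left half-plane'' --- misplaces where the rank condition acts (injectivity of $\sum_kB_k$, not spectral assignment) and reintroduces the pole-shifting gain that the construction is explicitly designed to eliminate; it is the identity feedback in the reduced coordinates that makes the backstepping kernels explicit, $Q(s,t)=e^{(s-t)\Lambda}$ and $\Gamma(s)=e^{s\Lambda}C$, and hence yields the stated Neumann-series controller. Without Proposition 3.1 and this Lyapunov computation, the proof has a hole at its center.
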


	The first  work dealing with input delayed unstable PDEs is \cite{5}, where a reaction-diffusion equation is considered, and a backstepping approach is developed to stabilize it.
	In \cite{4}, a stable PDE is controlled by means of a delayed bounded linear control operator (see also \cite{19} for a semilinear case). In the present work, the control operator is unbounded (being a boundary control) and the open-loop system is unstable. Unbounded control operators have been considered in \cite{16}, \cite{15}, \cite{14} for both wave and heat equations, where time varying delays are allowed with a bound on the time-derivative of the delay function. For the finite-dimensional case we refer to the results  by Mazenc and his co-workers \cite{maz,maz2} where they show the stability independent of how large the variation of the delay is. For other results on this subject we refer to \cite{2},\cite{3},\cite{400}. From a practical perspective, it is worth noting that input delays are generally uncertain and possibly time varying. The case
	of a distributed actuation scheme is even more complex
	since spatially-varying delays can arise due to network
	and transport effects that may vary among different spatial regions.   A first example of this situation occurs in
	the context of biological systems and population dynamics \cite{38}. An example of the latter can be found
	in the context of epidemic dynamics \cite{42}. The problem of stabilization of reaction-diffusion PDEs with internal time-and spatially-varying input delays is solved  in the recent work \cite{lahh}, via the backstepping technique.  For future, we intend to design a boundary control with time-and spatially-varying  delay for such equations.
	\section{Proof of the main result} As we already mentioned, in order to prove Theorem \ref{t1}, we shall  successfully merge the  two  techniques: the backstepping control design and the direct-proportional control design, respectively.  More exactly, we extend to the non-diagonal case the results in \cite{book}, then we  involve the method in \cite{bookK,krstic1,krstic2}.

	\noindent\textit{Proof of Theorem \ref{t1}.} Let us notice that, for $\gamma>0$ large enough, the matrix $\Lambda+\gamma I$ is nonsingular. Next, we fix $0<\gamma_1<\gamma_2<...<\gamma_N$, $N$ large enough positive numbers such that for each of them  equation \eqref{e2} is well-posed, and such that the matrices $\Lambda+\gamma_kI,\ k=1,2,...,N$ are all invertible.  We denote by $D_{\gamma_k},\ k=1,2,...,N,$ the corresponding  operators given by \eqref{e2}. 
	
	We  denote by $\textbf{B}$ the  Gram  matrix 
	\begin{equation}\label{ie5}\textbf{B}:=\left( \left<l_i,l_j\right>_0\right)_{1\leq i,j\leq N}. \end{equation} Next, we set
	\begin{equation}\label{bo}B_k:=\left( \Lambda+{\gamma_k}I\right)^{-1}\ \textbf{B}\ \overline{\left( \Lambda^T+{\gamma_k}I\right)^{-1}} ,\ k=1,...,N.\end{equation}
	(Note that, in the semi-simple case in \cite{book}, we defined $B_k$, see \cite[Eq. (2.21)]{book},  in terms of a diagonal matrix $\Lambda_{\gamma_k}.$ If $\Lambda$ is diagonal, then $ \left( \Lambda+{\gamma_k}I\right)^{-1}$ and $\Lambda_{\gamma_k}$ coincide.)
	
	The following result is essential in the definition of the feedback control. 
	\begin{proposition}\label{p1} The sum of $B_k$'s, i.e. $B_1+B_2+...+B_N$ is an invertible matrix.
	\end{proposition}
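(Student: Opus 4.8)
The plan is to exploit the Gram–matrix structure of $\textbf{B}$ together with the Kalman–type rank condition of Assumption 5, reducing the invertibility of $B_1+\cdots+B_N$ to the triviality of its kernel. First I would record that $\textbf{B}$, being a Gram matrix (see \eqref{ie5}), is Hermitian and positive semidefinite, and that each $B_k$ in \eqref{bo} is a congruence $B_k=M_k\,\textbf{B}\,M_k^{*}$ with $M_k:=(\Lambda+\gamma_k I)^{-1}$ and $M_k^{*}:=\overline{(\Lambda^{T}+\gamma_k I)^{-1}}$ its conjugate transpose; hence every $B_k$ is Hermitian positive semidefinite and so is $S:=\sum_{k=1}^{N}B_k$. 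Thus $S$ is invertible if and only if $x^{*}Sx>0$ for every $x\neq 0$, i.e.\ if and only if $\ker S=\{0\}$.

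Next I would analyze this kernel. Suppose $x^{*}Sx=0$. Since each summand is nonnegative, $x^{*}B_kx=(M_k^{*}x)^{*}\textbf{B}(M_k^{*}x)=0$ for all $k$. Writing $w^{(k)}:=M_k^{*}x$ and using that $\textbf{B}$ is the Gram matrix of $l_1,\dots,l_N$, the vanishing of this quadratic form is equivalent to $\big\|\sum_i \overline{w^{(k)}_i}\,l_i\big\|_0=0$, that is $\sum_i \overline{w^{(k)}_i}\,l_i=0$ in $\mathcal{H}_0$. Unwinding $\overline{w^{(k)}}{}^{T}=\overline{x}^{T}M_k$, this reads
\begin{equation}\label{pker} \overline{x}^{T}(\Lambda+\gamma_k I)^{-1}L=0\quad\text{in }\mathcal{H}_0,\qquad k=1,\dots,N. \end{equation}

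The heart of the argument is to turn the $N$ resolvent identities \eqref{pker} into the Krylov relations attached to Assumption 5. I would set $\xi:=\overline{x}$ and consider the $\mathcal{H}_0$–valued rational function $f(\gamma):=\xi^{T}(\Lambda+\gamma I)^{-1}L$. Multiplying by $\det(\Lambda+\gamma I)$ yields the $\mathcal{H}_0$–valued polynomial $p(\gamma):=\xi^{T}\,\text{adj}(\Lambda+\gamma I)\,L$ of degree at most $N-1$; since $\gamma_1,\dots,\gamma_N$ are distinct and each $\Lambda+\gamma_k I$ is invertible, \eqref{pker} gives $p(\gamma_k)=0$ for all $k$, and a Vandermonde argument forces every coefficient of $p$ to vanish. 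Expanding the adjugate $\text{adj}(\Lambda+\gamma I)=\sum_{m=0}^{N-1}\gamma^{\,m}R_m$ by the classical Faddeev–LeVerrier recursion, the matrices $R_m$ are related to $I,\Lambda,\dots,\Lambda^{N-1}$ by an invertible triangular change of basis, so $p\equiv0$ is equivalent to $\xi^{T}\Lambda^{m}L=0$ for $m=0,\dots,N-1$, i.e.\ $\xi^{T}(\Lambda\ L)=0$ with $(\Lambda\ L)=[L\ \Lambda L\ \cdots\ \Lambda^{N-1}L]$.

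To conclude I would invoke Assumption 5: there is a set of positive measure on which $(\Lambda\ L)$ has full rank $N$, hence at some point its pointwise evaluation is invertible. As $\xi$ is a constant vector, $\xi^{T}(\Lambda\ L)=0$ then forces $\xi=0$, whence $x=0$ and $\ker S=\{0\}$. The main obstacle is precisely this middle passage: converting the vanishing of the quadratic forms $x^{*}B_kx$ — which individually only see the possibly degenerate object $\textbf{B}$ — into the algebraic Krylov relations, and then reading the functional (positive–measure) rank condition correctly. In the complex case one must additionally track the conjugations entering \eqref{pker}; this costs nothing, since conjugation preserves rank and Assumption 5 is therefore equally available for the conjugated data.
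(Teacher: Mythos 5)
Your proof is correct, and its outer skeleton matches the paper's: reduce invertibility of $S=B_1+\cdots+B_N$ to triviality of its kernel, use that $\mathbf{B}$ in \eqref{ie5} is a positive semidefinite Gram matrix so that $\langle Sz,z\rangle_N=0$ splits into the $N$ separate conditions $\langle(\Lambda+\gamma_kI)^{-1}L,z\rangle_N=0$ in $\mathcal{H}_0$, $k=1,\dots,N$, and then invoke Assumption 5. Where you genuinely depart from the paper is in the key step that converts these $N$ resolvent relations into the Krylov relations $\langle\Lambda^mL,z\rangle_N=0$, $m=0,\dots,N-1$. The paper does this by hand: it forms the determinant of the matrix $[(\Lambda+\gamma_1I)^{-1}L\ \cdots\ (\Lambda+\gamma_NI)^{-1}L]$ and performs an iterated sequence of column operations based on the resolvent identity $(\Lambda+\gamma_kI)^{-1}L-(\Lambda+\gamma_1I)^{-1}L=(\gamma_1-\gamma_k)(\Lambda+\gamma_1I)^{-1}(\Lambda+\gamma_kI)^{-1}L$, progressively clearing resolvents until the condition becomes $\det[\Lambda^{N-1}L\ \cdots\ \Lambda L\ L]\neq0$, i.e.\ Assumption 5. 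You instead argue by interpolation: multiplying the rational function $\gamma\mapsto\overline{z}^T(\Lambda+\gamma I)^{-1}L$ by $\det(\Lambda+\gamma I)$ gives the $\mathcal{H}_0$-valued polynomial $\overline{z}^T\mathrm{adj}(\Lambda+\gamma I)L$ of degree at most $N-1$, which vanishes at the $N$ distinct points $\gamma_1,\dots,\gamma_N$ and hence identically, and the triangular (Faddeev--LeVerrier) relation between the coefficients of $\mathrm{adj}(\Lambda+\gamma I)$ and $I,\Lambda,\dots,\Lambda^{N-1}$ then yields the Krylov relations. Both routes use exactly the same two ingredients (distinctness of the $\gamma_k$ and Assumption 5), but yours is shorter, less error-prone, and makes transparent why precisely $N$ distinct shifts suffice (a degree-$(N-1)$ polynomial with $N$ roots is zero); the paper's is more elementary, requiring no adjugate or characteristic-polynomial machinery, and its chain of determinant manipulations establishes an equivalence with the Kalman condition rather than only the implication you need. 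One small point to state explicitly, at the same level of rigor as the paper: the relations $\overline{z}^T\Lambda^mL=0$ hold only almost everywhere in $\mathcal{H}_0$, so before evaluating pointwise you should intersect the positive-measure full-rank set of Assumption 5 with the complement of the exceptional null set; this intersection is still nonempty, which is what forces $z=0$.
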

	\begin{proof}\ Let $ z= (z_1\ z_2\ ...\ z_N)^T\in \mathbb{K}^N$ such that $\displaystyle\left( \sum_{k=1}^NB_k\right) z=0.$ Hence,
		$$\left<B_1z,z\right>_N+\left<B_2z,z\right>_N+...+\left<B_Nz,z\right>_N=0.$$Or, equivalently by the definition of $B_k$
		\begin{equation}\label{lo1}\sum_{k=1}^N \left<\mathbf{B}\overline{\left( \Lambda^T+{\gamma_k}I\right)^{-1}}z, \overline{\left( \Lambda^T+{\gamma_k}I\right)^{-1}}z\right>_N=0.\end{equation}Recall that, $\mathbf{B}$, being a Gramian, is positive semi-definite. Thus,
		$$\left<\mathbf{B}\overline{\left( \Lambda^T+{\gamma_k}I\right)^{-1}}z, \overline{\left( \Lambda^T+{\gamma_k}I\right)^{-1}}z\right>_N=0,\ \forall k=1,2,...,N.$$
		Due to the definition of $\mathbf{B}$, the above implies that
		$$\left<\left<L,\overline{(\Lambda^T+\gamma_kI)^{-1}}z\right>_N,\left<L,\overline{(\Lambda^T+\gamma_kI)^{-1}}z\right>_N\right>_0=0,\ k=1,2,...,N.$$ Which yields
		$$\left<L,\overline{\left( \Lambda^T+{\gamma_k}I\right)^{-1}}z\right>_N=0 \text{ almost everywhere},\ k=1,2,...,N.$$ Or, equivalently,
		$$\left<\left(\Lambda+\gamma_kI\right) ^{-1}L,z\right>_N=0 \text{ almost everywhere},\ k=1,2,...,N. $$Recall that $L:=\left( \begin{array}{c}l_1\  l_2\  ...\ l_N\end{array}\right)^T.$ The above can be viewed as a linear system in $U$, of order $N$, with the unknowns $z_1,z_2,...,z_N$, which are constants in $U$. It has only the trivial solution if and only if the determinant of the matrix of the system
		$$\det\left[ \left( \Lambda+\gamma_1I\right) ^{-1}L\ \  \left( \Lambda+\gamma_2I\right) ^{-1}L\  \ ... \ \left(\Lambda+\gamma_NI\right) ^{-1}L \right]\neq 0,$$in a nonzero measure set. Performing elementary transformations in the determinant, namely subtracting from each column $k=2,3,...,N$ the first column, i.e.
		$$\left( \Lambda+\gamma_kI\right) ^{-1}L-\left(  \Lambda+\gamma_{1}I\right) ^{-1}L=(\gamma_1-\gamma_k)\left(  \Lambda+\gamma_1I\right) ^{-1}\left(  \Lambda+\gamma_kI\right) ^{-1}L,$$ $k=2,...,N,$ the above is equivalent with
		$$\det\left[ \left(  \Lambda+\gamma_1I\right) ^{-1}L\ \ \left(  \Lambda+\gamma_1I\right) ^{-1}\left(  \Lambda+\gamma_2I\right) ^{-1}L\ ...\ \left(  \Lambda+\gamma_1I\right) ^{-1}\left(  \Lambda+\gamma_NI\right) ^{-1}L\right]$$is not equal zero. This holds true if and only if
		$$\det\left[ L\ \  \left(  \Lambda+\gamma_2I\right) ^{-1}L\ \ \left(  \Lambda+\gamma_3I\right) ^{-1}L ...\ \left(  \Lambda+\gamma_NI\right) ^{-1}L\right]\neq 0.$$Similar actions as above, namely subtracting from each column $k=3,4,...,N$ the second column, then multiplying the result by $ \Lambda+\gamma_2I $, lead to the equivalent condition
		$$\det\left[ \left(  \Lambda+\gamma_2I\right) L\ \ L \ \ \left(  \Lambda+\gamma_3I\right) ^{-1}L\  ...\ \left(  \Lambda+\gamma_NI\right) ^{-1}L\right]\neq0. $$We go on like this and arrive at
		$$\det\left[\left(  \Lambda+\gamma_NI\right)\cdot ...\cdot \left(  \Lambda+\gamma_2I\right) L\ \ \left(  \Lambda+\gamma_NI\right)\cdot ...\cdot \left( \Lambda+\gamma_3I\right)L\ \ ...\ L   \right]\neq0. $$Again elementary transformations: subtracting from the $(N-1)$th column the $N$th column multiplied by $\gamma_N$, yield
		$$\det\left[\left( \Lambda+\gamma_NI\right)\cdot ...\cdot \left(  \Lambda+\gamma_2I\right) L\ \ ...\ \   \left(  \Lambda+\gamma_NI\right)\left(  \Lambda+\gamma_{N-1}I\right) L\ \   \Lambda L\ \ L\right] $$
		Then, subtracting from the $(N-2)$th column the $(N-1)$th column multiplied by $\gamma_N+\gamma_{N-1}$ and the $N$th column multiplied by $\gamma_N\gamma_{N-1}$ we obtain that
		$$\det\left[ \left(  \Lambda+\gamma_NI\right)\cdot ...\cdot \left(  \Lambda+\gamma_2I\right) L\ \ ...\  \Lambda^2L \ \   \Lambda L \ \  L\right]\neq 0. $$The procedure goes in a similar way until we get that the above is equivalent with the fact that the determinant
		$$\det\left[  \Lambda^{N-1}L\ \  \Lambda^{N-2}L\ \ ...\ \  \Lambda L\ \ L\right]\neq0. $$ In virtue of Assumption 5, this holds true. We conclude that,  $\displaystyle\left( \sum_{k=1}^NB_k\right) z=0$ if and only if $z=0$. Or, in other words, the matrix $\displaystyle \sum_{k=1}^NB_k$ is invertible. 
	\end{proof}

	So, we may well-define the matrix
	\begin{equation}\label{ie9}A:=(B_1+B_2+...+B_{N})^{-1}.\end{equation} 
	Next, for $U:[-\tau,\infty)\rightarrow \mathbb{K}^N$, we set
	\begin{equation}\label{ie10}\left[u_k(U)\right](t-\tau):=-\left<\left(\Lambda^T+\gamma_kI\right)^{-1}AU(t-\tau), L\right>_{N},\ t\geq-\tau, \end{equation}$k=1,2,...,N$. Then,  introduce $u$ as 
	\begin{equation}\label{mio27}\begin{aligned}\left[ u(U) \right](t-\tau):&=\left[ u_1(U)+u_2(U)+...+u_{N}(U)\right](t-\tau) \\&
	=-\sum_{k=1}^N\left<\left( \Lambda^T+\gamma_kI\right) ^{-1}AU(t-\tau), \ L\right>_{N}.
	\end{aligned}\end{equation}   The vector $U(t)$ will be constructed below via the Artstein transform \cite{100}, and, at the end of the day will be a function of 
	$Y$ (this way we close the loop). 
	
	For latter purpose, let us show that
	\begin{equation}\label{e27}\left(\begin{array}{c}\left<D_{\gamma_k}u_k,\psi_1\right>\smallskip\\ \left<D_{\gamma_k}u_k,\psi_2\right>\\..................\\\left<D_{\gamma_k}u_k,\psi_N\right>\end{array}\right)=-B_k\textbf{A}\left(\begin{array}{c}\left<U(t-\tau),\psi_1\right>\smallskip\\
	\left<U(t-\tau),\psi_2\right>\\................\\\left<U(t-\tau),\psi_N\right>\end{array}\right),\end{equation} for all $k=1,...,N$. This is indeed so. We have by \eqref{e3} that
	$$\left( \begin{array}{c}\left<D_{\gamma_k}u_k,\psi_1\right>\\ \left<D_{\gamma_k}u_k,\psi_2\right>\\ ...\\ \left<D_{\gamma_k}u_k,\psi_N\right>\end{array}\right) =(\Lambda+\gamma_kI)^{-1}
	\left(\begin{array}{c}\left<u_k, l_1\right>_0\\ \left<u_k,l_2\right>_0\\ ...\\ \left<u_k,l_N\right>_0.\end{array} \right)$$
	Which, by taking into account \eqref{ie5} and \eqref{ie10}, yields
	$$\left( \begin{array}{c}\left<D_{\gamma_k}u_k,\psi_1\right>\\ \left<D_{\gamma_k}u_k,\psi_2\right>\\ ...\\ \left<D_{\gamma_k}u_k,\psi_N\right>\end{array}\right) =-(\Lambda+\gamma_kI)^{-1}\mathbf{B}\overline{\left(\Lambda^T+\gamma_kI\right)^{-1}} A\left(\begin{array}{c}\left<U(t-\tau),\psi_1\right>\smallskip\\
	\left<U(t-\tau),\psi_2\right>\\........\\\left<U(t-\tau),\psi_N\right>\end{array}\right) ,$$and so, by \eqref{bo},  \eqref{e27} is proved.

	Next, we plug this feedback into equation \eqref{e1}, and argue similarly as in \cite[Eqs. (2.27)-(2.29)]{book}. This way, we equivalently rewrite \eqref{e1} as an internal-type control problem:
	\begin{equation}\label{v1}\begin{aligned} \frac{d}{dt}y(t)=&\mathcal{A}y(t)+\sum_{k=1}^N(\mathcal{A}+\gamma_kI)D_{\gamma_k}u_k(U(t-\tau)) 
	\\& -2\sum_{i,j,k=1}^N\lambda_{ij} \left<D_{\gamma_k}u_k(U(t-\tau)),\psi_i\right>\varphi_j,\ t>0.
	\end{aligned}\end{equation}
	We apply the well-know  projection method to \eqref{v1}, and split it into two systems: one of which is unstable but is finite-dimensional, and the other one which is infinite-dimensional but  is stable. We will take care  of the finite-dimensional unstable part, only. 
	
	Projecting equation \eqref{v1} on the space spanned by $\left\{\psi_i\right\}_{i=1}^N,$ and taking into account the relation \eqref{e27}, we arrive at
	\begin{equation}\label{op2}\frac{d}{dt}Y(t)=\Lambda Y(t) -\left[\Lambda +\sum_{k=1}^N\gamma_kB_kA\right]U(t-\tau),\ t>0.\end{equation}We denote by 
	$$C:=-\Lambda -\sum_{k=1}^N\gamma_kB_kA.$$ Thus, \eqref{op2} becomes
	\begin{equation}\label{op3}\frac{d}{dt}Y(t)=\Lambda Y(t)+CU(t-\tau),\ t>0.\end{equation}This is an equation of the same type as \cite[Eq. (2.1)]{bookK}. Hence, we may apply the backstepping design  via a trasport PDE technique described in \cite{bookK}. But, before this, let us notice that  if there is no delay ($\tau=0$) in \eqref{op3}, we may take $U\equiv Y$ to obtain an exponentially stable system. Indeed, in this case, \eqref{op3} reads as
	\begin{equation}\label{op4} \frac{d}{dt}Y(t)=\Lambda Y(t)+CY(t)=-\sum_{k=1}^N\gamma_kB_kAY(t),\ t>0.\end{equation} We have
	$$\Lambda +C =-\sum_{k=1}^N\gamma_kB_kA=-\gamma_1I+\sum_{k=2}^N(\gamma_1-\gamma_k)B_kA,$$ by virtue of the fact that $A=(B_1+...+B_N)^{-1}.$ Then, for any $z\in \mathbb{K}^N$, we have
	$$\left<(\Lambda+C)z,Az\right>_N=-\gamma_1\|A^\frac{1}{2}z\|_N^2+\sum_{k=2}^N(\gamma_1-\gamma_k)\left<B_kAz,Az\right>_N.$$ (Here, $A^\frac{1}{2}$ is the square root matrix of $A$, which can be defined because $A$ is symmetric and positive definite.) Recalling the definition of $B_k$, see also relation \eqref{lo1}, we see that 
	$$(\gamma_1-\gamma_k)\left<B_kAz,Az\right>_N\leq 0,\ k=2,3,...,N.$$Consequently,
	$$\left<(\Lambda+C)z,Az\right>_N\leq-\gamma_1\|A^\frac{1}{2}z\|_N^2,\ \forall z\in \mathbb{K}^N.$$ Using this, and taking into account that $A$ is symmetric and positive definite, we get after scalarly multiplying equation \eqref{op4} by $AY$ that
	$$\|Y(t)\|^2_N\leq Ce^{-\gamma_1 t}\\Y(0)\|^2,\ \forall t\geq0.$$   We shall see below that this fact will eliminate the need of the matrix $K$ from the pole shifting theorem, used in \cite{bookK}. 
	
	Following the ideas in \cite[Section 2.2]{bookK}, we model the delay in \eqref{op3} by the following first-order hyperbolic PDE
	$$\partial_tZ(s,t)=\partial_sZ(s,t);\ Z(\tau,t)=U(t).$$ The system \eqref{op3} can now be written as
	$$\frac{d}{dt}Y(t)=\Lambda Y(t)+CZ(0,t).$$ Then, we consider the backstepping transfomation
	$$W(s,t)=Z(s,t)-\int_0^sQ(s,r)Z(r,t)dr-\Gamma(s)Y(t)$$ with which we want to map the above system into the target system
	$$\left\{\begin{array}{l}\frac{d}{dt}Y(t)=(\Lambda+C) Y(t)+CW(0,t),\\
	\partial_tW(s,t)=\partial_sW(s,t),\\
	W(\tau,t)=0.\end{array}\right.\ $$
	Performing similar computations as in \cite[Eqs. (2.26)-(2.39)]{bookK}, and recalling  that $\Lambda+C$ is Hurwitz, we get that
	$$Q(s,t)=e^{(s-t)\Lambda} \text{ and }\Gamma(s)=e^{s\Lambda}C.$$ So, the stabilizing control is given in an implicit form as
	$$U(t)=Y(t)+\int_{\max(t-\tau,\tau)}^te^{(t-\tau-s)\Lambda}CU(s)ds,\ t\geq0,$$ and $U(t)=0,\ t\in[-\tau,0).$ We have to solve the above fixed point implicit equation. To this end,  for any integrable vector $F$ on $\mathbb{R}$, we define
	\begin{equation}\label{po30}(\mathcal{T}_\tau F)(t):=\int_{\max(t-\tau,\tau)}^te^{(t-\tau-s)\Lambda}CF(s)ds.\end{equation} It follows that $U(t)$ can be written as the Neumann series
	$$U(t)=\sum_{j=0}^\infty (\mathcal{T}_\tau^jY)(t).$$ We can show the convergence of this series in a similar manner as in \cite[Lemma 3]{krstic1}. 
	
	Finally, arguing as in \cite[Section IV]{krstic2}, we conclude that, once we plug the feedback
	$$u(t-\tau)=-\displaystyle \sum_{k=1}^N\left<\left( \Lambda^T+\gamma_kI\right) ^{-1}A\left[\sum_{j=0}^\infty (\mathcal{T}_\tau^jY)(t-\tau)\right], \ L\right>_{N},$$ into equation \eqref{e1} it yields the desired result of the Theorem \ref{t1}. The details are omitted. 
	
	\section{Stabilization of the heat equation with nonlocal boundary conditions and delay control}
	As an application, let us consider the following nonlocal boundary value problem
	\begin{equation}\label{o1} \left\lbrace \begin{array}{l}y_t(t,x)-y''(t,x)-cy(t,x)=0,\ t>0, x\in(0,\pi),\\
	
	y(t,0)=u(t-\tau),\\
	y'(t,0)+y'(t,\pi)+\alpha y(t,\pi)=0,\ t>0,\\
	
	y(0,x)=y_o(x),\ x\in(0,\pi).
	\end{array} \right. \end{equation} Here, $'$ stands for the spatial  derivative, i.e., $f'=\frac{\partial f}{\partial_x}.$ $\alpha,c$ are some positive numbers. Boundary-value problems, with  two, three, or multi-point nonlocal boundary conditions, arise naturally in thermal conduction, semiconductor or hydrodynamic problems. For details see \cite{222}
	and the references therein. As far as we know, concerning the boundary stabilization of PDEs with non-local boundary conditions, there exist only the result in \cite{ionnonloc}, while for the case with delay in the control there is no result in the literature.

	In this case $\mathcal{H}=L^2(0,\pi)$ and $\mathcal{H}_0=\mathbb{R}$. The operator $\mathcal{A}:\mathcal{D}(\mathcal{A})\subset L^2(0,\pi)\rightarrow L^2(0,\pi)$ is given as
	$$\mathcal{A}y:=y''+cy,\ \forall y\in\mathcal{D}(\mathcal{A}),$$where $\mathcal{D}(\mathcal{A})$ is the set
	$$\left\lbrace y\in H^2(0,\pi):\ y(0)=0,\ y'(0)+y'(\pi)+\alpha y(\pi)=0\right\rbrace.$$
	By \cite{1}, we know that $\mathcal{A}$ has a countable set of eigenvalues $\left\lbrace \lambda_j\right\rbrace_{j=0}^\infty $ described as follows
	$$\lambda_j=\left\lbrace \begin{array}{ll}-(2k+1)^2+c & ,\text{if } j=2k, \ k\in\mathbb{N},\\ -(2\beta_k)^2+c & ,\text{if } j=2k+1,\ k\in\mathbb{N} .\end{array} \right. $$ Here, $\beta_k,\ k\in\mathbb{N},$ are the roots of the equation 
	$$\cot(\beta\pi)=-\frac{\alpha}{2\beta}.$$ Easily seen, given $\rho>0$, there exists $N\in\mathbb{N}$ such that 
	$$-\rho>\lambda_{2N+2}>\lambda_{2N+3}>....$$The corresponding eigenfunctions are precisely given in \cite{1}. More precisely they are $\left\lbrace w_{k1},\ w_{k2}\right\rbrace_{k=0}^\infty$, where
	$$w_{k1}=\sin((2k+1)x),\  \text{and }w_{k2}=\sin(2\beta_kx), $$ $k\in\mathbb{N}.$ As stated and proved in \cite[Lemma 4.1]{1}, it happens that the above system does not form a basis in $L^2(0,\pi)$.  That is why, in \cite{1}, the authors introduced the following new set of functions
	\begin{equation}\label{e3?}\varphi_j(x)=\left\lbrace \begin{array}{ll} w_{k1}(x)&,\text{if }j=2k,\\
	\left[ w_{k2}(x)-w_{k1}(x)\right](2\delta_k)^{-1}&,\text{if }j=2k+1,\end{array} \right.  \end{equation}$ k\in\mathbb{N}.$ Here, $\delta_k:=\beta_k-k-\frac{1}{2}.$ Then, in \cite[Lemma 5.1]{1} they proved that the system $\left\lbrace \varphi_j\right\rbrace _{j=0}^\infty$ forms a Riesz basis in $L^2(0,\pi)$. Moreover, they do also precised the bi-orthonormal system to $\left\lbrace \varphi_j\right\rbrace _{j=0}^\infty$, which is given by
	\begin{equation}\label{e4}\psi_j(x)=\left\lbrace \begin{array}{ll}v_{k2}(x)+v_{k1}(x)&,\text{if }j=2k,\\
	2\delta_kv_{k2}(x)&,\text{if }j=2k+1,\end{array}  \ \ \ \ \ k\in\mathbb{N}. \right. \end{equation} Where
	$$v_{k1}(x)=\frac{2}{\pi}\left\lbrace \sin((2k+1)x)-\frac{2k+1}{\alpha}\cos((2k+1)x)\right\rbrace ,$$and
	$$v_{k2}=C_{k2}\left\lbrace \sin(2\beta_kx)-\frac{2\beta_k}{\alpha}\cos(2\beta_kx)\right\rbrace,\ k=0,1,2,...$$Here, $C_{2k}$ is some constant which assures that the systems $ \left\lbrace w_{k1},w_{k2}\right\rbrace_{k=0}^\infty $ and $\left\lbrace v_{k1},v_{k2}\right\rbrace_{k=0}^\infty$  are bi-orthonormal.
	
	In the present case, the matrix $\Lambda$ is given in \cite[Eq. (2.20)]{ionnonloc}, as
	$$
	\left(\begin{array}{ccccccc}\lambda_0&2\beta_0+1&0&\dots&0&0&0\\
	0&\lambda_1&0&\dots&0&0&0\\
	0&0&\lambda_2&2\beta_1+3&0&\dots&0\\
	\ddots&\ddots&\ddots&\ddots&\ddots&\ddots&\ddots\\
	0&0&0&\dots&\lambda_{2N-1}&0&0\\
	0&0&0&0&\dots&\lambda_{2N}&2\beta_N+2N+1\\
	0&0&0&\dots&0&0&\lambda_{2N+1} \end{array}\right)$$ 
	The lifting operator is defined as: for $\gamma>0$, let $D$ be the solution to the equation 
	\begin{equation}\label{oe5}\left\lbrace \begin{array}{l}\begin{aligned}&-D''(x)-cD(x)-2\sum_{j=0}^{2N+1}\lambda_j\left<D,\psi_j\right>\varphi_j\\&
	-2\sum_{j=0}^{N}(2\beta_j+2j+1)\left<D,\psi_{2j+1}\right>\varphi_{2j}+\gamma D=0,\ x\in(0,\pi),\end{aligned}\\
	\\
	D(0)=1,\ D'(0)+D'(\pi)+\alpha D(\pi)=0.\end{array} \right. \end{equation}   In \cite[Lemma 2.1]{ionnonloc}, it is shown the well-posedness of this equation. 
	
	So far, Assumptions 1-4 are verified.  Let us show that Assumption 5 holds true as-well.  By \cite[Eq. (2.9)]{ionnonloc}, we know that
	\begin{equation}\label{e15}\left\lbrace \begin{array}{l}\left<D,\psi_{2j}\right>= \frac{1}{\gamma-\lambda_{2j}}\left[\psi'_{2j}(0)+\frac{2\beta_j+2j+1}{\gamma-\lambda_{2j+1}}\psi'_{2j+1}(0) \right],\\
	\\
	\left<D,\psi_{2j+1}\right>=\frac{1}{\gamma-\lambda_{2j+1}} \psi'_{2j+1}(0),\end{array}\right.  \end{equation}for $j=0,1,2,...,N$.  Or, equivalently,
	$$\left(\begin{array}{c}\left<D,\psi_0\right>\\ \left<D,\psi_1\right>\\ ... \\ \left<D,\psi_{2N+1}\right>\end{array} \right)=\left(\Lambda+\gamma I\right)^{-1}\left( \begin{array}{c} \psi_0'(0)\\ \psi_1'(0)\\... \\ \psi_{2N+1}'(0)\end{array}\right)   $$ Hence, in this case $l_j=\psi_j'(0),\ j=0,1,...,2N+1$.  Since $l_j\neq0$ for all $j=0,1,...,2N+1$, by the special form of $\Lambda$, it is easy to see that the couple $(\Lambda \ L)$ satisfies the Kalman rank condition. Therefore, Assumption 5 is fullfiled. Consequently, the following result holds true

	\begin{theorem}
		The unique solution of the closed-loop parabolic equation with nonlocal boundary values and input delay
		\begin{equation}\left\{\begin{array}{l}\partial_ty(t,x)= y''(t,x)+cy(t,x),\ t>0, x\in (0,\pi),\\
		y(t,0)=-\displaystyle \sum_{k=1}^N\left<\left( \Lambda^T+\gamma_k I\right) ^{-1}A\left[\sum_{j=0}^\infty (\mathcal{T}_\tau^jY)(t-\tau)\right],\  L\right>_{2N+2},\\y'(t,0)+y'(t,\pi)+\alpha y(t,\pi)=0,\ t>0,\\
		\\
		y(0,x)=y_o(x),\ x\in(0,\pi).
		\end{array} \right. \end{equation} is asymptotically exponentially converging to zero in $L^2(0,\pi).$ 
	\end{theorem}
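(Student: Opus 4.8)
The plan is to obtain the statement as a direct corollary of the abstract Theorem~\ref{t1}, so the whole task reduces to checking that the nonlocal boundary value problem \eqref{o1} fits the framework of Assumptions~1--5. First I would recast \eqref{o1} into the abstract form \eqref{e1} by taking $\mathcal{H}=L^2(0,\pi)$, $\mathcal{H}_0=\mathbb{R}$, the operator $\mathcal{A}y:=y''+cy$ on $\mathcal{D}(\mathcal{A})=\{y\in H^2(0,\pi):\ y(0)=0,\ y'(0)+y'(\pi)+\alpha y(\pi)=0\}$, and the boundary trace $\mathcal{B}y:=y(0)$. With these identifications the boundary condition $\mathcal{B}y(t)=u(t-\tau)$ reproduces the first two lines of \eqref{o1}, and the unstable subspace has dimension $2N+2$ (the indices $0,\ldots,2N+1$), which is why $N$ is everywhere replaced by $2N+2$ in the statement.

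Second, I would dispatch Assumptions~1--4, most of which are already available. The reduced operator $\mathcal{A}_0=\mathcal{A}|_{\mathcal{D}(\mathcal{A})\cap\ker(\mathcal{B})}$ is a second-order Sturm--Liouville-type operator and hence generates an analytic $C_0$-semigroup on $L^2(0,\pi)$; its spectrum, its lack of a Riesz eigenbasis, the corrected Riesz basis $\{\varphi_j\}_{j\geq0}$ and the biorthogonal system $\{\psi_j\}_{j\geq0}$ of \eqref{e4} are all supplied by \cite{1}, which settles Assumption~1. The eigenvalue formulas $\lambda_{2k}=-(2k+1)^2+c$ and $\lambda_{2k+1}=-(2\beta_k)^2+c$ give $\Re\lambda_j\to-\infty$ and the strict tail ordering $-\rho>\lambda_{2N+2}>\lambda_{2N+3}>\cdots$, which is Assumption~2. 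The block-diagonal form of $\Lambda$ displayed above shows that $\mathrm{span}\{\varphi_i\}_{i\leq 2N+1}$ is $\mathcal{A}_0$-invariant, i.e.\ Assumption~3, and the well-posedness of the lifting equation \eqref{oe5}, i.e.\ Assumption~4, is exactly \cite[Lemma~2.1]{ionnonloc}.

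Third, and this is where the genuine work lies, I would establish the Kalman-type rank condition of Assumption~5. Reading off \eqref{e15}, the lifting data assemble into $L=(\psi_0'(0),\psi_1'(0),\ldots,\psi_{2N+1}'(0))^T$, so two points must be settled: that each $l_j=\psi_j'(0)$ is nonzero, which follows by differentiating the closed forms \eqref{e4} at $x=0$; and that the pair $(\Lambda\ L)$ is controllable. Since $\Lambda$ is block diagonal, its $2\times2$ blocks coupling $(\varphi_{2j},\varphi_{2j+1})$ through the off-diagonal entry $2\beta_j+2j+1\neq0$ while keeping the eigenvalues $\lambda_{2j}\neq\lambda_{2j+1}$ distinct, a Popov--Belevitch--Hautus test applied block by block reduces controllability to the nonvanishing, for each $j$, of $\psi_{2j+1}'(0)$ and of one explicit combination of $\psi_{2j}'(0)$ and $\psi_{2j+1}'(0)$. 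Here the factorisation $\lambda_{2j}-\lambda_{2j+1}=2\delta_j(2\beta_j+2j+1)$ makes the off-diagonal entry cancel against the spectral gap, reducing that combination to a quantity that the explicit formulas \eqref{e4} keep nonzero. This block-by-block bookkeeping, forced by the non-diagonal structure of $\Lambda$, is the step I expect to be the main obstacle.

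Finally, with Assumptions~1--5 in force (and $N$ read as $2N+2$), I would invoke Theorem~\ref{t1} verbatim. Proposition~\ref{p1} guarantees that $A=(B_1+\cdots+B_{2N+2})^{-1}$ of \eqref{ie9} is well defined, the Artstein transform produces the operator $\mathcal{T}_\tau$ of \eqref{po30}, the Neumann series $\sum_{j=0}^\infty(\mathcal{T}_\tau^jY)(t-\tau)$ converges, and the closed-loop feedback coincides with the boundary condition displayed in the statement. Theorem~\ref{t1} then delivers the asymptotic exponential decay of the solution in $L^2(0,\pi)$, as claimed.
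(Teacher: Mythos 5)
Your proposal is correct and takes essentially the same route as the paper: recast \eqref{o1} into the abstract framework, verify Assumptions 1--4 by invoking \cite{1} and \cite[Lemma 2.1]{ionnonloc}, read off $l_j=\psi_j'(0)$ from \eqref{e15}, check the Kalman-type condition of Assumption 5, and then apply Theorem \ref{t1} with the dimension $N$ replaced by $2N+2$. The only difference is one of detail at Assumption 5: where the paper settles it with ``since $l_j\neq0$, by the special form of $\Lambda$ it is easy to see'' that the rank condition holds, your block-by-block PBH analysis makes explicit what must actually be verified --- beyond $l_{2j+1}\neq0$, the nonvanishing of the combination $l_{2j}+\frac{1}{2\delta_j}\,l_{2j+1}$ coming from the left eigenvector of the block eigenvalue $\lambda_{2j}$, using precisely the factorisation $\lambda_{2j}-\lambda_{2j+1}=2\delta_j(2\beta_j+2j+1)$ --- so your treatment is, if anything, more careful than the paper's at the one step where genuine work is required (though, like the paper, you stop short of verifying that this combination is nonzero, which ultimately hinges on the sign of the normalisation constants $C_{j2}$).
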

	\section{Conclusions} Merging the two techniques: backstepping design and direct-proportional design, we  solved the stabilization problem for non-diagonal systems with boundary delay control.    
	
	Time-delays are a delicate issue in engineering systems, which often involve either communications lags or physical dead-time which reveals troublesome in the design and tuning of feedback control laws. Therefore, a robust controller is needed. On this subject, in the finite-dimensional case, there are many important results obtained by Bresch-Pietri and co-workers, see e.g. \cite{bresch1,bresch2}.  Since the method we applied here consists of the split of the infinite-dimensional system into an unstable finite-dimensional one and an infinite-dimensional stable one, we can try to apply the complex robust control design methods in the afore-mentioned papers, to the finite-dimensional part, in order to construct a robust control with delay. But then, when returning to the initial infinite-dimensional system, the main problem would be to show that the robust controller assures its stability as-well. This is not a simple task and  is left for a subsequent work. 
	
	Numerical examples to show the effectiveness of the proportional controller were performed in \cite{liu}. 
	\section*{Disclosure statement}There is no potential conflict to declare.
	\section*{Funding}This  was supported by a grant of the Romanian Ministry of Research
	and Innovation, CNCS--UEFISCDI, project number
	PN-III-P1-1.1-TE-2019-0348, within PNCDI III.
	\section*{Appendix}
	
	\noindent\textbf{Backstepping vs. direct-proportional control design. }  The feedback law designed in \cite{book} is given a priori in the form
	$$u(t,x)=\left<AY(t), \Phi(x)\right>_N,\ t>0,$$for $x$ on the whole boundary or a part of it only. $u$ is plugged into the equations, and  is shown that it assures the stability of the system. Because of its special form it is called "proportional".  It can be equivalently rewritten as 
	$$u(t,x)=\sum_{k=1}^N\int_{\mathcal{O}}y(t,\xi)\varphi_k(\xi)d\xi \Phi_k(x),$$ where, of course, the boundary functions $\Phi_k$ are known. Or, furthermore, 
	$$u(t,x)=\int_\mathcal{O}k(x,\xi)y(t,\xi)d\xi,$$where,  $k(x,\xi)=\sum_{k=1}^N\Phi_k(x)\varphi(\xi)$. The kernel, $k$, which defines the controller,  is given a priori (directly) hence the terminology "direct-proportional".  On the other hand, the backstepping technique involves as-well a kernel. Roughly speaking, the idea is to make the transformation $w(t,x)=\int_\mathcal{O}\tilde{k}(x,\xi)y(t,\xi)d\xi$ which leads to a stable equation in terms of $w$. The kernel $\tilde{k}$ is deduced by imposing that $w$ satisfies the targeted stable equation. After finding $\tilde{k}$, one may express the backstepping control as
	$$u(t,x)=\int_\mathcal{O}\tilde{k}(x,\xi)y(t,\xi)d\xi.$$So, the exact form of the controller is given post priori, i.e., in an indirect way since one has to solve first a hyperbolic equation in order to deduce $\tilde{k}$.
	
	In conclusion, both stabilizing feedback forms involve some kernels, such that the Volterra transformation  maps the original plant to a target  stable system. In the backstepping case, the kernel is deduced by solving a PDE of hyperbolic type in the Goursat form; while, in the direct-proportional case the kernel is given   a priori,  based on the Riesz basis system.

\end{document}